\DeclareMathOperator{\id}{id}
\DeclareMathOperator{\Con}{Con}
\newtheorem{theorem}{Theorem}[section]
\newtheorem{definition}[theorem]{Definition}
\newtheorem{lemma}[theorem]{Lemma}
\newtheorem{remark}[theorem]{Remark}
\newtheorem{example}[theorem]{Example}
\title{Extensions and congruences of partial lattices}
\author{Ivan~Chajda and Helmut~L\"anger}
\date{}
\begin{document}

\footnotetext{Support of the research of both authors by the Austrian Science Fund (FWF), project I~4579-N, and the Czech Science Foundation (GA\v CR), project 20-09869L, entitled ``The many facets of orthomodularity'', is gratefully acknowledged.}

\maketitle

\begin{abstract}
For a partial lattice $\mathbf L$ the so-called two-point extension is defined in order to extend $\mathbf L$ to a lattice. We are motivated by the fact that the one-point extension broadly used for partial algebras does not work in this case, i.e.\ the one-point extension of a partial lattice need not be a lattice. We describe these two-point extensions and prove several properties of them. We introduce the concept of a congruence on a partial lattice and show its relationship to the notion of a homomorphism and its connections with congruences on the corresponding two-point extension. In particular we prove that the quotient $\mathbf L/E$ of a partial lattice $\mathbf L$ by a congruence $E$ on $\mathbf L$ is again a partial lattice and that the two-point extension of $\mathbf L/E$ is isomorphic to the quotient lattice of the two-point extension $\mathbf L^*$ of $\mathbf L$ by the congruence on $\mathbf L^*$ generated by $E$. Several illustrative examples are enclosed.
\end{abstract}

{\bf AMS Subject Classification:} 06B75, 06A06, 06B05, 06B10, 08A55

{\bf Keywords:} Partial lattice, partial sublattice, congruence, two-point extension

\section{Introduction}

Although our paper is devoted to extensions of partial lattices to lattices as well as to congruences on partial lattices, we assume that it should be helpful to get a brief introduction to partial algebras in general and then apply these concepts to partial lattices. The source of the following concepts are the monographs \cite{B86} and \cite{B93} by P.~Burmeister and, concerning partial lattices, the paper \cite{CS} by the first author and Z.~Seidl. It is worth noticing that the closedness of classes of partial algebras with respect to quotients, subalgebras etc.\ was profoundly investigated by K.~Denecke in \cite D and by B.~Staruch and B.~Staruch in \cite{SS}.
    
In particular, we use the following concepts concerning various types of identities in partial algebras (see \cite{B93}).

Let $\tau=(n_i;i\in I)$ be a similarity type, $\mathbf A=(A,F)$ and $\mathbf B=(B,F)$ with $F=(f_i;i\in I)$ partial algebras of type $\tau$ and $p,q$ terms of type $\tau$. Then
\[
p\stackrel e=q
\]
means: $p$ and $q$ are defined and they are equal. We say that $\mathbf A$ satisfies the {\em weak identity}
\[
p\stackrel w\approx q
\]
if the following holds: If $a_1,\ldots,a_n\in A$ and $p(a_1,\ldots,a_n)$ and $q(a_1,\ldots,a_n)$ are defined then $p(a_1,\ldots,a_n)=q(a_1,\ldots,a_n)$. We say that $\mathbf A$ satisfies the {\em strong identity}
\[
p\stackrel s\approx q
\]
if the following holds: For $a_1,\ldots,a_n\in A$ the expression $p(a_1,\ldots,a_n)$ is defined if and only if $q(a_1,\ldots,a_n)$ is defined and in this case $p(a_1,\ldots,a_n)=q(a_1,\ldots,a_n)$. The {\em identity} $p\approx q$ is called {\em regular} if $p$ and $q$ contain the same variables.

The partial algebra $\mathbf A$ is called a {\em weak subalgebra} of $\mathbf B$ if $A\subseteq B$ and if for all $i\in I$ and all $a_1,\ldots,a_{n_i}\in A$, if $f_i(a_1,\ldots,a_{n_i})$ is defined in $\mathbf A$ then it is defined in $\mathbf B$ and has the same value. The weak subalgebra $\mathbf A$ of $\mathbf B$ is called a {\em subalgebra} of $\mathbf B$ if for all $i\in I$ and all $a_1,\ldots,a_{n_i}\in A$, if $f_i(a_1,\ldots,a_{n_i})$ is defined in $\mathbf B$ then it is defined in $\mathbf A$.

A {\em homomorphism} from $\mathbf A$ to $\mathbf B$ is a mapping $h\colon A\rightarrow B$ such that for all $i\in I$ and all $a_1,\ldots,a_{n_i},a\in A$, $f_i(a_1,\ldots,a_{n_i})\stackrel e=a$ implies $f_i\big(h(a_1),\ldots,h(a_{n_i})\big)\stackrel e=h(a)$. A {\em homomorphism} from $\mathbf A$ to $\mathbf B$ is called {\em closed} if for all $i\in I$ and all $a_1,\ldots,a_{n_i}\in A$, if $f_i\big(h(a_1),\ldots,h(a_{n_i})\big)$ is defined in $\mathbf B$ then $f_i(a_1,\ldots,a_{n_i})$ is defined in $\mathbf A$. It is easy to see that $\mathbf A$ is a weak subalgebra of $\mathbf B$ if and only if $A\subseteq B$ and $\id_A$ is a homomorphism from $\mathbf A$ to $\mathbf B$ and that $\mathbf A$ is a subalgebra of $\mathbf B$ if and only if $A\subseteq B$ and $\id_A$ is a closed homomorphism from $\mathbf A$ to $\mathbf B$. An {\em isomorphism} is a bijective closed homomorphism.

Let $\mathbf A=\big(A,(f_i;i\in I)\big)$ be a partial algebra and assume $c\notin A$. Then the {\em one-point extension} $\bar{\mathbf A}$ of $\mathbf A$ is defined as follows:
\begin{enumerate}[{\rm(i)}]
\item If all $f_i$ are defined everywhere then $\bar{\mathbf A}:=\mathbf A$.
\item if at least one $f_i$ is not defined everywhere then $\bar{\mathbf A}:=\big(A\cup\{c\},(g_i;i\in I)\big)$ where for all $i\in I$ and all $a_1,\ldots,a_{n_i}\in A\cup\{c\}$
\[
g_i(a_1,\ldots,a_{n_i}):=\left\{
\begin{array}{ll}
f_i(a_1,\ldots,a_{n_i}) & \text{if }f_i(a_1,\ldots,a_{n_i})\text{ is defined in }\mathbf A, \\
c                       & \text{otherwise}.
\end{array}
\right.
\]
\end{enumerate}
It was proved in \cite D and \cite{SS} that a class of partial algebras can be described by a set of strong and regular identities if and only if it is closed under the formation of subalgebras, closed homomorphic images, direct products, initial segments and one-point extensions.

However, not every interesting class of partial algebras can be described by strong and regular identities. For example, partial lattices treated in \cite{CS} are described by identities containing those for absorption that are neither regular nor strong. This is in accordance with the fact that the one-point extension of a partial lattice is not a lattice in general. We want to present another construction for partial lattices, the so-called {\em two-point extension}, which preserves also weak identities like the absorption identities.

\section{Partial lattices}

There exist several definitions of a partial lattice. For our purposes we adopt the following one from \cite{CS}.

\begin{definition}\label{def2}
A {\em partial lattice} is a partial algebra $(L,\vee,\wedge)$ of type $(2,2)$ satisfying the following identities:
\begin{align*}
                     x\vee x\stackrel s\approx x & \text{ and }x\wedge x\stackrel s\approx x, \\
               x\vee y\stackrel s\approx y\vee x & \text{ and }x\wedge y\stackrel s\approx y\wedge x, \\
(x\vee y)\vee z\stackrel s\approx x\vee(y\vee z) & \text{ and }(x\wedge y)\wedge z\stackrel s\approx x\wedge(y\wedge z)
\end{align*}
as well as the following {\em duality conditions}:
\begin{align*}
  x\vee y\stackrel e=x & \text{ implies }x\wedge y\stackrel e=y, \\
x\wedge y\stackrel e=x & \text{ implies }x\vee y\stackrel e=y.
\end{align*}
We call weak subalgebras of a partial lattice also {\em partial sublattices}.
\end{definition}

The following lemma was proved in \cite{CS}. For the sake of completeness we repeat the proof.

\begin{lemma}\label{lem2}
Let $\mathbf L$ be a partial lattice. Then the following hold:
\begin{enumerate}[{\rm(i)}]
\item $\mathbf L$ satisfies $(x\vee y)\wedge x\stackrel w\approx x$ and $(x\wedge y)\vee x\stackrel w\approx x$.
\item If $\mathbf L$ satisfies $(x\vee y)\wedge x\stackrel s\approx x$ and $(x\wedge y)\vee x\stackrel s\approx x$ then $\mathbf L$ is a lattice.
\end{enumerate}
\end{lemma}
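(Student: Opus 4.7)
For part (i), my plan is to derive both weak absorption identities from the duality conditions together with the strong semilattice identities. Fix $x,y\in L$ and assume that $x\vee y$ is defined; otherwise the weak identity $(x\vee y)\wedge x\stackrel w\approx x$ is vacuous on the left-hand side. Write $z:=x\vee y$. The calculation $(x\vee x)\vee y=x\vee y=z$, which is defined because $x\vee x\stackrel s= x$ holds strongly, combined with strong associativity (which is equi-definedness plus equality) forces $x\vee(x\vee y)=x\vee z$ to be defined and equal to $z$. By strong commutativity, $z\vee x=z$. Now the first duality condition, applied to the pair $(z,x)$, yields $z\wedge x\stackrel e= x$, that is, $(x\vee y)\wedge x$ is defined and equals $x$. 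This is actually stronger than the claimed weak identity. The second identity $(x\wedge y)\vee x\stackrel w\approx x$ is obtained by the completely dual argument, using the second duality condition.

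For part (ii), the key observation is that a strong identity $p\stackrel s\approx q$ asserts equi-definedness together with equality. In the strong identity $(x\vee y)\wedge x\stackrel s\approx x$ the right-hand side $x$ is defined for every $x\in L$, so the left-hand side must be defined for all $x,y\in L$. But the term $(x\vee y)\wedge x$ can only be defined when its subterm $x\vee y$ is defined, so $\vee$ must be total on $L$. The dual argument applied to $(x\wedge y)\vee x\stackrel s\approx x$ shows that $\wedge$ is total as well. Once both operations are everywhere defined, the strong idempotency, commutativity, and associativity identities from Definition~\ref{def2} become ordinary semilattice identities, and the two hypothesized strong identities become the classical absorption laws. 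Hence $(L,\vee,\wedge)$ is a lattice.

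The main obstacle throughout is keeping the bookkeeping of definedness correct: a strong identity guarantees that the two sides agree in their domain of definition, but one still has to exhibit at least one side as defined before chaining equalities. In part (i) this forces the slightly indirect route through $(x\vee x)\vee y$ in order to legally conclude that $x\vee z$ is defined before invoking the duality condition; everything else reduces to routine bookkeeping.
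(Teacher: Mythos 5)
Your proof is correct and follows essentially the same route as the paper's: part (i) is established in both cases by deriving $(x\vee y)\vee x\stackrel e=x\vee y$ from strong idempotency, associativity and commutativity and then applying the duality condition, while part (ii) is exactly the observation the paper dismisses as clear. The only cosmetic difference is that you start from the definedness of $x\vee y$ rather than of $(x\vee y)\wedge x$, which gives the marginally stronger conclusion you point out.
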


\begin{proof}
\
\begin{enumerate}[(i)]
\item Let $a,b\in L$ and assume $(a\vee b)\wedge a$ to be defined. Then $a\vee b$ is defined. Since $a\vee a\stackrel e=a$ we conclude that $(a\vee a)\vee b$ is defined. Hence $a\vee(a\vee b)$ and $(a\vee b)\vee a$ are defined and we obtain
\[
(a\vee b)\vee a\stackrel e=a\vee(a\vee b)\stackrel e=(a\vee a)\vee b\stackrel e=a\vee b.
\]
Applying the duality conditions yields $(a\vee b)\wedge a\stackrel e=a$. The second weak identity follows analogously.
\item is clear.
\end{enumerate}
\end{proof}

Let $\mathbf P=(P,\leq)$ be a poset and $a,b\in P$. We define
\begin{align*}
L(a,b) & :=\{x\in P\mid x\leq a,b\}, \\
U(a,b) & :=\{x\in P\mid a,b\leq x\}.
\end{align*}
The following notions as well as the following result are taken from \cite{CS}:

The poset $\mathbf P$ is said to satisfy the
\begin{itemize}
\item {\em lower bound property} (LBP) if for all $x,y\in L$ the set $L(x,y)$ is either empty or possesses a greatest element,
\item {\em upper bound property} (UBP) if for all $x,y\in L$ the set $U(x,y)$ is either empty or possesses a smallest element.
\end{itemize}
Let $\mathbf L=(L,\vee,\wedge)$ be a partial lattice. On $L$ we define the binary relations $\leq_\vee$ and $\leq_\wedge$ as follows:
\begin{align*}
  x\leq_\vee y & \text{ if and only if }x\vee y\stackrel e=y, \\
x\leq_\wedge y & \text{ if and only if }x\wedge y\stackrel e=x.
\end{align*}
Due to the duality conditions, the relations $\leq_\vee$ and $\leq_\wedge$ coincide and form a partial order relation on $L$. We call it the {\em induced order} of $\mathbf L$ and denote them by $\leq$.

We call a poset satisfying the {\rm(LBP)} and the {\rm(UBP)} a {\em partially lattice-ordered set}.

\begin{example}
The poset depicted in Figure~1

\vspace*{-3mm}

\begin{center}
\setlength{\unitlength}{7mm}
\begin{picture}(6,6)
\put(2,1){\circle*{.3}}
\put(1,2){\circle*{.3}}
\put(3,2){\circle*{.3}}
\put(1,4){\circle*{.3}}
\put(3,4){\circle*{.3}}
\put(2,5){\circle*{.3}}
\put(2,1){\line(-1,1)1}
\put(2,1){\line(1,1)1}
\put(1,2){\line(0,1)2}
\put(1,2){\line(1,1)2}
\put(3,2){\line(-1,1)2}
\put(3,2){\line(0,1)2}
\put(2,5){\line(-1,-1)1}
\put(2,5){\line(1,-1)1}
\put(1.85,.25){$0$}
\put(.3,1.85){$a$}
\put(.3,3.85){$c$}
\put(3.4,1.85){$b$}
\put(3.4,3.85){$d$}
\put(1.85,5.4){$1$}
\put(1.25,-.75){{\rm Fig.~1}}
\end{picture}
\end{center}

\vspace*{3mm}

is not a partially lattice-ordered set since e.g.\ $U(a,b)=\{c,d,1\}\neq\emptyset$, but $U(a,b)$ has no smallest element.
\end{example}

The following theorem was partly proved in \cite{CS}. For the sake of completeness we provide a complete proof.

\begin{theorem}
\
\begin{enumerate}[{\rm(i)}]
\item Let $\mathbf L=(L,\vee,\wedge)$ be a partial lattice and $\leq$ its induced order. Then $\mathbb P(\mathbf L):=(L,\leq)$ is a partially lattice-ordered set.
\item Let $\mathbf P=(P,\leq)$ be a partially lattice-ordered set and define partial binary operations $\vee$ and $\wedge$ on $P$ as follows:
\begin{align*}
  x\vee y & \left\{
\begin{array}{ll}
:=\sup(x,y)         & \text{if }U(x,y)\neq\emptyset, \\
\text{is undefined} & \text{otherwise},
\end{array}
\right. \\
x\wedge y & \left\{
\begin{array}{ll}
:=\inf(x,y)         & \text{if }L(x,y)\neq\emptyset, \\
\text{is undefined} & \text{otherwise}
\end{array}
\right.
\end{align*}
{\rm(}$x,y\in P${\rm)}. Then $\mathbb L(\mathbf P):=(P,\vee,\wedge)$ is a partial lattice.
\item Let $\mathbf L=(L,\vee,\wedge)$ be a partial lattice. Then $\mathbb L\big(\mathbb P(\mathbf L)\big)=\mathbf L$.
\item Let $\mathbf P=(P,\leq)$ be a partially lattice-ordered set. Then $\mathbb P\big(\mathbb L(\mathbf P)\big)=\mathbf P$.
\end{enumerate}
\end{theorem}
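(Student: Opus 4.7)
The plan is to treat (i)--(iv) in sequence, with (i) doing the real work; (ii) is a diagrammatic check in the language of $\sup$ and $\inf$, and (iii), (iv) reduce to comparing the two structures on the common underlying set.

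For (i), I would first prove the following subsidiary claim: \emph{$a\vee b$ is defined in $\mathbf L$ if and only if $U(a,b)\neq\emptyset$ in the induced order, and in that case $a\vee b=\sup(a,b)$.} The ``only if'' direction comes from $a\vee(a\vee b)=(a\vee a)\vee b=a\vee b$ (using strong associativity and idempotence), which gives $a\leq a\vee b$ and, by symmetry, $b\leq a\vee b$. For the ``if'' direction, given $c\in U(a,b)$, we have $a\vee c\stackrel{e}{=}c$ and $b\vee c\stackrel{e}{=}c$, so $a\vee(b\vee c)\stackrel{e}{=}a\vee c\stackrel{e}{=}c$ is defined; strong associativity then forces $(a\vee b)\vee c$ to be defined, whence $a\vee b$ itself is defined, and the same chain yields $(a\vee b)\vee c=c$, i.e.\ $a\vee b\leq c$, so $a\vee b$ is indeed $\sup(a,b)$. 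The dual argument covers $\wedge$ and LBP, and the subsidiary claim delivers UBP/LBP directly.

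For (ii), I would verify the identities of Definition~\ref{def2} against $\vee:=\sup$ and $\wedge:=\inf$. Idempotence and commutativity are immediate from the symmetric defining properties of $\sup$ and $\inf$; the duality conditions reduce to $\sup(x,y)=x\Longleftrightarrow y\leq x$ (which then gives $\inf(x,y)=y$) and symmetrically. The step requiring care is the strong associativity of $\vee$: if $(x\vee y)\vee z$ is defined with value $u$, then $u$ is a common upper bound of $x,y,z$, so $U(y,z)\ni u$ gives $y\vee z$ defined, and then $x$ together with $y\vee z$ also has $u$ as an upper bound, giving $x\vee(y\vee z)$ defined and equal to $\sup\{x,y,z\}=u$. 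The converse direction is symmetric and $\wedge$ is dual.

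Parts (iii) and (iv) are then short. For (iii), I would invoke the subsidiary claim from (i) to see that $a\vee b$ is defined in $\mathbf L$ precisely when $U(a,b)\neq\emptyset$ in $\mathbb P(\mathbf L)$, i.e.\ precisely when $a\vee b$ is defined in $\mathbb L\bigl(\mathbb P(\mathbf L)\bigr)$, with the common value $\sup(a,b)$; dually for $\wedge$, so the two partial algebras coincide. For (iv), the induced order $\leq'$ of $\mathbb L(\mathbf P)$ satisfies $x\leq' y\Longleftrightarrow\sup(x,y)=y\Longleftrightarrow x\leq y$, so the two posets coincide. The main obstacle is the subsidiary claim in (i): turning the mere existence of a common upper bound into the definedness of the join. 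The bridge is strong associativity applied through the chosen upper bound $c$, which is what ``activates'' $a\vee b$ even when one has only indirect information about it.
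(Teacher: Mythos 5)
Your proposal is correct and follows essentially the same route as the paper: the ``subsidiary claim'' (that $a\vee b$ is defined iff $U(a,b)\neq\emptyset$, with value $\sup(a,b)$, proved by passing strong associativity through a chosen upper bound $c$) is exactly the argument the paper carries out inside part (i) and reuses for (iii), and your treatments of (ii) and (iv) match the paper's as well.
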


\begin{proof}
\
\begin{enumerate}[(i)]
\item Let $a,b,c\in L$. Then $a\vee a\stackrel e=a$ and hence $a\leq a$. If $a\leq b\leq a$ then $a\vee b\stackrel e=b$ and $b\vee a\stackrel e=a$ and hence $a\stackrel e=b\vee a\stackrel e=a\vee b\stackrel e=b$. If $a\leq b\leq c$ then
\[
c\stackrel e=b\vee c\stackrel e=(a\vee b)\vee c\stackrel e=a\vee(b\vee c)\stackrel e=a\vee c,
\]
i.e.\ $a\leq c$. Now assume $c\in U(a,b)$. Then $a\vee c\stackrel e=c$ and $b\vee c\stackrel e=c$ and hence
\[
c\stackrel e=a\vee c\stackrel e=a\vee(b\vee c)\stackrel e=(a\vee b)\vee c,
\]
i.e.\ $a\vee b$ is defined and $a\vee b\leq c$. Because of
\begin{align*}
a\vee(a\vee b) & \stackrel e=(a\vee a)\vee b\stackrel e=a\vee b, \\
b\vee(a\vee b) & \stackrel e=(a\vee b)\vee b\stackrel e=a\vee(b\vee b)\stackrel e=a\vee b
\end{align*}
we have $a,b\leq a\vee b$. Together we obtain that in case $U(a,b)\neq\emptyset$ the element $a\vee b$ is defined and coincides with $\sup(a,b)$. The dual assertion can be proved analogously.
\item Let $a,b,c\in P$. Clearly, $a\vee a\stackrel s=a$, $a\wedge a\stackrel s=a$, $a\vee b\stackrel s=b\vee a$ and $a\wedge b\stackrel s=b\wedge a$. Assume $(a\vee b)\vee c$ to be defined. Since $(a\vee b)\vee c\in U(b,c)$ the element $b\vee c$ is defined and since $(a\vee b)\vee c\in U(a,b\vee c)$ the element $a\vee(b\vee c)$ is defined and we obtain $(a\vee b)\vee c=\sup(a,b,c)=a\vee(b\vee c)$. If, conversely, $a\vee(b\vee c)$ is defined then
\[
a\vee(b\vee c)\stackrel e=(b\vee c)\vee a\stackrel e=(c\vee b)\vee a\stackrel e=c\vee(b\vee a)\stackrel e=(b\vee a)\vee c\stackrel e=(a\vee b)\vee c.
\]
The strong identity $(x\wedge y)\wedge z\stackrel s\approx x\wedge(y\wedge z)$ can be proved analogously. Finally, the duality conditions can be easily verified.
\item Let $\mathbb P(\mathbf L)=(L,\leq)$ and $\mathbb L\big(\mathbb P(\mathbf L)\big)=(L,\sqcup,\sqcap)$ and $a,b\in L$. According to the proof of (i), if $U(a,b)\neq\emptyset$ then $a\vee b$ is defined and $a\vee b=\sup(a,b)$. Conversely, if $a\vee b$ is defined then
\begin{align*}
a\vee(a\vee b) & \stackrel e=(a\vee a)\vee b\stackrel e=a\vee b, \\
b\vee(a\vee b) & \stackrel e=(a\vee b)\vee b\stackrel e=a\vee(b\vee b)\stackrel e=a\vee b
\end{align*}
and hence $a\vee b\in U(a,b)$ showing $U(a,b)\neq\emptyset$. Hence the following are equivalent: $a\vee b$ is defined; $U(a,b)\neq\emptyset$; $a\sqcup b$ is defined. In this case we have $a\sqcup b=\sup(a,b)=a\vee b$. Analogously, one can prove that $a\sqcap b$ is defined if and only if $a\wedge b$ is defined and in this case $a\sqcap b=a\wedge b$.
\item If $\mathbb L(\mathbf P)=(P,\vee,\wedge)$, $\mathbb P\big(\mathbb L(\mathbf P)\big)=(P,\sqsubseteq)$ and $a,b\in P$ then the following are equivalent: $a\sqsubseteq b$; $a\vee b\stackrel e=b$; $\sup(a,b)\stackrel e=b$; $a\leq b$. This shows $\mathbb P\big(\mathbb L(\mathbf P)\big)=\mathbf P$.
\end{enumerate}
\end{proof}

\section{Extensions}

As mentioned above, the one-point extension of a partial lattice need not be a lattice. In order to avoid this difficulty, we introduce the two-point extension of a partial lattice as follows.

\begin{definition}
Let $\mathbf L=(L,\vee,\wedge)$ be a partial lattice and $\leq$ its induced order and assume $0,1\notin L$. Then the {\em two-point extension} $\mathbf L^*=(L^*,\leq^*)$ of $\mathbf L$ is defined as follows:
\begin{enumerate}[{\rm(i)}]
\item If $\vee$ and $\wedge$ are defined everywhere then
\[
\mathbf L^*:=(L,\leq),
\]
\item if $\wedge$ is defined everywhere, but $\vee$ is not then
\[
\mathbf L^*:=(L\cup\{1\},\leq\cup(L^*\times\{1\}),
\]
\item if $\vee$ is defined everywhere, but $\wedge$ is not then
\[
\mathbf L^*:=(L\cup\{0\},\leq\cup(\{0\}\times L^*),
\]
\item if neither $\vee$ nor $\wedge$ is defined everywhere then
\[
\mathbf L^*:=(L\cup\{0,1\},\leq\cup(\{0\}\times L^*)\cup(L^*\times\{1\}).
\]
\end{enumerate}
\end{definition}

Clearly, $\mathbf L$ is a partial sublattice of $\mathbf L^*$. If $\mathbf L$ is an infinite lattice having neither a smallest nor a greatest element then its two-point extension coincides with $\mathbf L$. Hence, one should have in mind that $\mathbf L^*$ neither need have a smallest nor a greatest element.

The following lemma is obvious.

\begin{lemma}
Let $\mathbf L=(L,\vee,\wedge)$ be a partial lattice and $a,b,c,d\in L$. Then
\begin{enumerate}[{\rm(i)}]
\item $\mathbf L^*$ is a lattice with lattice operations
\[
x\vee^* y:=\sup_{\leq^*}(x,y)\text{ and }x\wedge^* y:=\inf_{\leq^*}(x,y)
\]
{\rm(}$x,y\in L^*${\rm)},
\item $a\vee b\stackrel e=c$ if and only if $a\vee^*b=c$, and $a\wedge b\stackrel e=d$ if and only if $a\wedge^*b=d$,
\item $a\vee^*b=\left\{
\begin{array}{ll}
a\vee b & \text{ if }U(a,b)\neq\emptyset, \\
1       & \text{ otherwise},
\end{array}
\right.$ and $a\wedge^*b=\left\{
\begin{array}{ll}
a\wedge b & \text{ if }L(a,b)\neq\emptyset, \\
0         & \text{ otherwise}.
\end{array}
\right.$
\end{enumerate}
\end{lemma}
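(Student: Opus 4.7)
The plan is that, since the lemma is marked ``obvious'', I only need to verify the three items carefully by reducing to the characterization of partial lattices via partially lattice-ordered sets (the theorem just proved) and checking that adding top/bottom elements does not disturb existing suprema and infima.

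First, I would check that $\leq^*$ is indeed a partial order in each of the four cases of the definition. In cases (ii)--(iv) we adjoin $1$ as a top and/or $0$ as a bottom to the poset $(L,\leq)$; reflexivity, antisymmetry and transitivity are then immediate from the corresponding properties of $\leq$ together with the fact that $0,1\notin L$.

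For part (i), I would prove the existence of $\sup_{\leq^*}(x,y)$ and $\inf_{\leq^*}(x,y)$ for all $x,y\in L^*$. If either argument equals $0$ or $1$, both bounds are trivial since $0$ is the least and $1$ the greatest element of $\mathbf L^*$. If $x=a$ and $y=b$ are both in $L$, I would split on whether $U(a,b)$ is empty in $(L,\leq)$: if not, then by Theorem (the previous one, in part (i) of its proof) $a\vee b$ is defined in $\mathbf L$ and equals $\sup_\leq(a,b)$, and this same element is still the supremum in $\mathbf L^*$ because the only additional upper bound introduced is $1$, which lies above every element; if $U(a,b)=\emptyset$ in $L$, then in $\mathbf L^*$ the only upper bound of $\{a,b\}$ is $1$, so $\sup_{\leq^*}(a,b)=1$. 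The case of infima is entirely dual, using $0$ in place of $1$.

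For part (ii), I would argue that $a\vee b\stackrel e=c$ in $\mathbf L$ is equivalent, by the previous theorem, to $c=\sup_\leq(a,b)$ in the induced poset, and the preceding argument shows this in turn is equivalent to $a\vee^*b=c$ (the suprema in $\leq$ and $\leq^*$ agree on elements of $L$ whenever they exist in $L$). The meet case is analogous. Part (iii) then drops out immediately by combining (ii) with the case analysis of (i): if $U(a,b)\neq\emptyset$ then $a\vee^*b=a\vee b$, and otherwise $a\vee^*b=1$, and dually for $\wedge^*$. The only point requiring mild care is verifying that when $U(a,b)\neq\emptyset$ in $L$ the element $\sup_\leq(a,b)$ remains the supremum in $\mathbf L^*$, i.e.\ that $1$ does not become a ``smaller'' upper bound; this is immediate because $1$ is the top of $\mathbf L^*$, so there is no real obstacle here.
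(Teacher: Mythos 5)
Your verification is correct and is exactly the argument the paper has in mind: the paper states this lemma without proof (``The following lemma is obvious''), and your case analysis on whether the arguments lie in $L$ and whether $U(a,b)$ (resp.\ $L(a,b)$) is empty, combined with the already-proved equivalence between $a\vee b$ being defined and $U(a,b)\neq\emptyset$, supplies the omitted details. The only point worth making explicit in your second case is that when $U(a,b)=\emptyset$ for some $a,b\in L$ the element $1$ really does belong to $L^*$ --- this holds because $U(a,b)=\emptyset$ forces $a\vee b$ to be undefined, so $\vee$ is not everywhere defined and the definition of the two-point extension adjoins $1$; with that one-line observation your argument is complete.
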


For homomorphisms of partial lattices and their two-point extensions, we can prove the following result.

\begin{theorem}\label{th2}
Let $\mathbf L_i=(L_i,\vee,\wedge),i=1,2,$ be partial lattices.
\begin{enumerate}[{\rm(i)}]
\item Let $h^*$ be a homomorphism from $\mathbf L_1^*$ to $\mathbf L_2^*$ with $h^*(L_1)\subseteq L_2$. Then $h^*|L_1$ is a homomorphism from $\mathbf L_1$ to $\mathbf L_2$.
\item Let $h$ be a closed homomorphism from $\mathbf L_1$ to $\mathbf L_2$. Then there exists some homomorphism $h^*$ from $\mathbf L_1^*$ to $\mathbf L_2^*$ with $h^*|L_1=h$.
\end{enumerate}
\end{theorem}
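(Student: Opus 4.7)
The two parts hinge on the dictionary provided by the preceding lemma: the partial equality $a\vee b\stackrel e=c$ in $\mathbf L$ coincides with the total equality $a\vee^*b=c$ in $\mathbf L^*$, and moreover $a\vee^*b\in L$ already forces $a\vee b\stackrel e=a\vee^*b$ (and similarly for $\wedge$).

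For (i), I assume $a_1\vee a_2\stackrel e=c$ in $\mathbf L_1$. Translating to $\mathbf L_1^*$ yields $a_1\vee^*a_2=c$; applying the lattice homomorphism $h^*$ gives $h^*(a_1)\vee^*h^*(a_2)=h^*(c)$ in $\mathbf L_2^*$. Since $h^*(L_1)\subseteq L_2$, all three values lie in $L_2$, and the dictionary translates this back to $h(a_1)\vee h(a_2)\stackrel e=h(c)$ in $\mathbf L_2$. The meet case is dual, so $h:=h^*|L_1$ is a homomorphism.

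For (ii) I split into the four cases from the definition of $\mathbf L^*$ according to which of $\vee,\wedge$ are total on $\mathbf L_1$. The key preparatory observation is the contrapositive of closedness: if $a\vee b$ is undefined in $\mathbf L_1$ for some $a,b\in L_1$, then $h(a)\vee h(b)$ is undefined in $\mathbf L_2$ (else closedness would force $a\vee b$ to exist), so $\vee$ is non-total on $\mathbf L_2$ as well and $L_2^*$ contains a top element $1_{L_2^*}$. Dually, non-totality of $\wedge$ on $\mathbf L_1$ forces the presence of $0_{L_2^*}\in L_2^*$. Hence whichever extra points $\{0,1\}\cap(L_1^*\setminus L_1)$ need to be mapped, the analogous extra points are available in $L_2^*$; I set $h^*|L_1:=h$ together with $h^*(0):=0_{L_2^*}$ and/or $h^*(1):=1_{L_2^*}$ as applicable.

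Verifying that $h^*$ preserves $\vee^*$ and $\wedge^*$ is then routine. For $a,b\in L_1$ on which the relevant operation is defined in $\mathbf L_1$, homomorphy of $h$ combined with the preceding lemma yields the equality. The only subtle subcase is $a,b\in L_1$ with, say, $a\vee b$ undefined in $\mathbf L_1$: then $a\vee^*b=1$ on the left and one needs $h(a)\vee^*h(b)=1_{L_2^*}$ on the right, which is exactly the contrapositive of closedness just noted; the meet subcase is dual. Pairs involving $0$ or $1$ themselves are immediate, since these elements act absolutely as bottom/top in both two-point extensions. The main obstacle is really the bookkeeping of matching up the extra points of $\mathbf L_1^*$ and $\mathbf L_2^*$, and this is resolved once and for all by the closedness observation made before the case analysis begins.
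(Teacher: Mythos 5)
Your proposal is correct and follows essentially the same route as the paper: part (i) via the dictionary between $\vee$ and $\vee^*$ (the paper phrases the definedness step as a short contradiction with $1_{\mathbf L_2^*}\notin L_2$, you invoke the lemma directly), and part (ii) via the same extension $h^*(0):=0_{\mathbf L_2^*}$, $h^*(1):=1_{\mathbf L_2^*}$, with the contrapositive of closedness guaranteeing both that these target points exist when needed and that undefined joins/meets in $\mathbf L_1$ are sent to undefined ones in $\mathbf L_2$.
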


\begin{proof}
\
\begin{enumerate}[(i)]
\item Put $h:=h^*|L_1$ and let $a,b\in L_1$. First assume $a\vee b$ to be defined. If $h(a)\vee h(b)$ would not be defined then we would conclude
\[
1_{\mathbf L_2}=h(a)\vee^*h(b)=h^*(a)\vee^*h^*(b)=h^*(a\vee^*b)=h(a\vee b)\in h(L_1)\subseteq L_2,
\]
a contradiction. Hence $h(a)\vee h(b)$ is defined and
\[
h(a\vee b)=h^*(a\vee^*b)=h^*(a)\vee^*h^*(b)=h(a)\vee h(b).
\]
Analogously, one can prove that whenever $a\wedge b$ is defined, also $h(a)\wedge h(b)$ is defined and $h(a\wedge b)=h(a)\wedge h(b)$.
\item Define $h^*\colon L_1^*\rightarrow L_2^*$ by
\begin{align*}
                h^*(x) & :=h(x)\text{ for all }x\in L_1, \\
h^*(0_{\mathbf L_1^*}) & :=0_{\mathbf L_2^*}\text{ if }0_{\mathbf L_1^*}\in L_1^*, \\
h^*(1_{\mathbf L_1^*}) & :=1_{\mathbf L_2^*}\text{ if }1_{\mathbf L_1^*}\in L_1^*.
\end{align*}
$\bullet$ $h^*$ is well-defined. \\
If $0_{\mathbf L_2^*}\notin L_2^*$ then $h(x)\wedge h(y)$ is defined for all $x,y\in L_1$ and hence, since $h$ is closed, $x\wedge y$ is defined for all $x,y\in L_1$ showing $0_{\mathbf L_1^*}\notin L_1^*$. Analogously, one can prove that $1_{\mathbf L_2^*}\notin L_2^*$ implies $1_{\mathbf L_1^*}\notin L_1^*$. \\
$\bullet$ $h^*$ is a homomorphism from $\mathbf L_1^*$ to $\mathbf L_2^*$. \\
Let $a,b\in L_1$. If $a\vee b$ is defined then $h(a)\vee h(b)$ is defined and
\[
h^*(a\vee^*b)=h(a\vee b)=h(a)\vee h(b)=h^*(a)\vee^*h(b).
\]
If $a\vee b$ is not defined then, since $h$ is closed, $h(a)\vee h(b)$ is not defined, too, and
\[
h^*(a\vee^*b)=h^*(1_{\mathbf L_1^*})=1_{\mathbf L_2^*}=h(a)\vee^*h(b)=h^*(a)\vee^*h^*(b).
\]
Analogously, one can show $h^*(a\wedge^*b)=h(a)\wedge^*h(b)$. It is not hard to prove
\begin{align*}
  h^*(x\vee^*y) & =h^*(x)\vee^*h(y), \\
h^*(x\wedge^*y) & =h^*(x)\wedge^*h(y)
\end{align*}
for all $(x,y)\in(L_1^*)^2\setminus L_1^2$. \\
$\bullet$ $h^*|L_1=h$. \\
This is clear.
\end{enumerate}
\end{proof}

The following example shows that $\mathbf L^*$ need not be the smallest lattice including $\mathbf L$ as a partial sublattice. Moreover, we show that in general the operator $^*$ does not preserve inclusion.

\begin{example}
The partial lattice $\mathbf L_1=(L_1,\vee,\wedge)$ visualized in Figure~2:

\vspace*{-1mm}

\begin{center}
\setlength{\unitlength}{7mm}
\begin{picture}(4,4)
\put(0,2){\circle*{.3}}
\put(2,0){\circle*{.3}}
\put(2,4){\circle*{.3}}
\put(4,2){\circle*{.3}}
\put(2,0){\line(-1,1)2}
\put(2,0){\line(1,1)2}
\put(2,4){\line(1,-1)2}
\put(1.2,-.75){{\rm Fig.~2}}
\end{picture}
\end{center}

\vspace*{5mm}

is a partial sublattice of the partial lattice $\mathbf L_2=(L_2,\vee,\wedge)$ depicted in Figure~3:

\vspace*{-1mm}

\begin{center}
\setlength{\unitlength}{7mm}
\begin{picture}(4,4)
\put(0,2){\circle*{.3}}
\put(2,0){\circle*{.3}}
\put(2,4){\circle*{.3}}
\put(4,2){\circle*{.3}}
\put(2,0){\line(-1,1)2}
\put(2,0){\line(1,1)2}
\put(2,4){\line(-1,-1)2}
\put(2,4){\line(1,-1)2}
\put(1.2,-.75){{\rm Fig.~3}}
\end{picture}
\end{center}

\vspace*{3mm}

and one can see that $\id_{L_1}$ is a homomorphism from $\mathbf L_1$ to $\mathbf L_2$ which is not closed. Although $L_1\subseteq L_2$, we do not have $L_1^*\subseteq L_2^*$ since $\mathbf L_1^*\cong\mathbf N_5$ whereas $\mathbf L_2^*=\mathbf L_2$. Moreover, $\mathbf L_1^*$ is not the smallest lattice including $\mathbf L_1$ as a partial sublattice.
\end{example}

\section{Congruences and quotient partial lattices}

We are going to introduce the concept of congruence on a partial lattice and show how to define the partial lattice operations for congruence classes in a natural way.

\begin{definition}\label{def1}
Let $\mathbf L=(L,\vee,\wedge)$ be a partial lattice. By a {\em congruence} on $\mathbf L$ is meant an equivalence relation $E$ on $L$ satisfying $\Theta(E)\cap L^2=E$ where $\Theta(E)$ denotes the congruence on the lattice $\mathbf L^*$ generated by $E$. Let $\Con\mathbf L$ denote the set of all congruences on $\mathbf L$. For $E\in\Con\mathbf L$ we define two partial operation $\vee$ and $\wedge$ on $L/E$ by
\begin{align*}
  [x]E\vee[y]E & \left\{
\begin{array}{ll}
:=[x\vee^*y]\big(\Theta(E)\big)\cap L & \text{if this set is non-empty}, \\
\text{is undefined}                   & \text{otherwise},
\end{array}
\right. \\
[x]E\wedge[y]E & \left\{
\begin{array}{ll}
:=[x\wedge^*y]\big(\Theta(E)\big)\cap L & \text{if this set is non-empty}, \\
\text{is undefined}                     & \text{otherwise}.
\end{array}
\right.
\end{align*}
\end{definition}

It is easy to see that the operations $\vee$ and $\wedge$ on $L/E$ are well-defined.

The next lemma shows that congruences as defined above have the expected properties.

\begin{lemma}
Let $\mathbf L=(L,\vee,\wedge)$ be a partial lattice. Then
\begin{enumerate}[{\rm(i)}]
\item $\Con\mathbf L=\{\Theta\cap L^2\mid\Theta\in\Con\mathbf L^*\}$,
\item $(\Con\mathbf L,\subseteq)$ is a complete lattice.
\end{enumerate}
\end{lemma}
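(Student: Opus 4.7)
The plan is to handle (i) by unfolding Definition \ref{def1} in both directions, and then to derive (ii) from closure under arbitrary intersections plus the existence of a greatest element.

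For part (i), one direction is essentially a restatement of the definition: if $E\in\Con\mathbf L$, then $E=\Theta(E)\cap L^2$ with $\Theta(E)\in\Con\mathbf L^*$, so $E$ lies in $\{\Theta\cap L^2\mid\Theta\in\Con\mathbf L^*\}$. For the converse, I take any $\Theta\in\Con\mathbf L^*$ and set $E:=\Theta\cap L^2$. Since $\Theta$ is an equivalence on $L^*$, the restriction $E$ is an equivalence on $L$. The key point is the minimality property of the generated congruence: $E\subseteq\Theta$ forces $\Theta(E)\subseteq\Theta$, whence
\[
\Theta(E)\cap L^2\subseteq\Theta\cap L^2=E,
\]
while $E\subseteq\Theta(E)\cap L^2$ is immediate from $E\subseteq\Theta(E)$ and $E\subseteq L^2$. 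This shows $E\in\Con\mathbf L$.

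For part (ii), I plan to show that $\Con\mathbf L$ is closed under arbitrary intersections and has a greatest element, which together imply it is a complete lattice (with joins recovered as meets of upper bounds). Given a family $(E_i)_{i\in I}$ in $\Con\mathbf L$ and $E:=\bigcap_i E_i$, the relation $E$ is again an equivalence on $L$. For each $i$ the containment $E\subseteq E_i$ yields $\Theta(E)\subseteq\Theta(E_i)$, and hence
\[
\Theta(E)\cap L^2\subseteq\Theta(E_i)\cap L^2=E_i;
\]
intersecting over $i$ gives $\Theta(E)\cap L^2\subseteq E$, and the reverse inclusion is trivial. The greatest element is $L^2$, which belongs to $\Con\mathbf L$ because $L^2\subseteq\Theta(L^2)\cap L^2\subseteq L^2$.

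I do not foresee a serious obstacle: both parts reduce to the minimality of $\Theta(E)$ as the smallest congruence of $\mathbf L^*$ containing $E$, together with the compatibility of set-theoretic intersection with the operation $(\cdot)\cap L^2$. The only point meriting a moment of care is to verify that the natural meet operation --- set-theoretic intersection --- indeed lands back in $\Con\mathbf L$; this is exactly what the calculation for (ii) establishes, and it can be viewed conceptually as the statement that $E\mapsto\Theta(E)\cap L^2$ is a closure operator on the equivalence relations of $L$ whose fixed points form precisely $\Con\mathbf L$.
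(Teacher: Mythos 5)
Your proposal is correct and follows essentially the same route as the paper: part (i) rests on the minimality of the generated congruence $\Theta(E)$, and part (ii) on closure of $\Con\mathbf L$ under arbitrary intersections. The only (harmless) variations are that in (ii) you argue directly via monotonicity of $E\mapsto\Theta(E)$ rather than intersecting the congruences $\Theta_i$ of $\mathbf L^*$ and invoking (i), and that you explicitly check the greatest element $L^2\in\Con\mathbf L$, a point the paper leaves implicit.
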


\begin{proof}
\
\begin{enumerate}[(i)]
\item Let $E$ be an equivalence relation on $L$. If $E\in\Con\mathbf L$ then $\Theta(E)\in\Con\mathbf L^*$ and $E=\Theta(E)\cap L^2$. If, conversely, $\Theta\in\Con\mathbf L^*$ and $\Theta\cap L^2=E$ then $E\subseteq\Theta$ and hence $\Theta(E)\subseteq\Theta$ whence
\[
E\subseteq\Theta(E)\cap L^2\subseteq\Theta\cap L^2=E
\]
which implies $\Theta(E)\cap L^2=E$ and hence $E\in\Con\mathbf L$.
\item If $E_i\in\Con\mathbf L$ for all $i\in I$ then for every $i\in I$ there exists some $\Theta_i\in\Con\mathbf L^*$ satisfying $\Theta_i\cap L^2=E_i$ and hence $\bigcap\limits_{i\in I}\Theta_i\in\Con\mathbf L^*$ and
\[
\bigcap_{i\in I}E_i=\bigcap_{i\in I}(\Theta_i\cap L^2)=(\bigcap_{i\in I}\Theta_i)\cap L^2\in\Con\mathbf L
\]
according to (i).
\end{enumerate}
\end{proof}

The following lemma describes the partial lattice operations in quotients of partial lattices.

\begin{lemma}\label{lem1}
Let $\mathbf L=(L,\vee,\wedge)$ be a partial lattice, $a,b\in L$ and $E\in\Con\mathbf L$. Then
\[
[a]E\vee[b]E\left\{
\begin{array}{ll}
=[a\vee b]E         & \text{if }U(a,b)\neq\emptyset, \\
\text{is undefined} & \text{if }U(a,b)=\emptyset\text{ and }[1]\big(\Theta(E)\big)=\{1\}, \\
=[\alpha]E          & \text{if }U(a,b)=\emptyset\text{ and }[1]\big(\Theta(E)\big)\neq\{1\}
\end{array}
\right.
\]
where $\alpha\in[1]\big(\Theta(E)\big)\setminus\{1\}$.
\end{lemma}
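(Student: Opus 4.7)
The plan is to unfold the definition of $[a]E\vee[b]E$ from Definition~\ref{def1} and reduce to an analysis of the set $[a\vee^*b]\big(\Theta(E)\big)\cap L$, split by cases according to the value of $a\vee^*b$ in $\mathbf L^*$. The preceding lemma describing $\mathbf L^*$ gives $a\vee^*b=a\vee b\in L$ when $U(a,b)\neq\emptyset$, and $a\vee^*b=1\in L^*\setminus L$ when $U(a,b)=\emptyset$, so the case split in the statement is exactly the case split on $a\vee^*b$.

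When $U(a,b)\neq\emptyset$ I would compute $[a\vee^*b]\big(\Theta(E)\big)\cap L=[a\vee b]\big(\Theta(E)\big)\cap L$, and the desired equality $[a\vee b]\big(\Theta(E)\big)\cap L=[a\vee b]E$ is immediate from the defining property $E=\Theta(E)\cap L^2$ of a congruence on a partial lattice: any $x\in L$ with $x\,\Theta(E)\,(a\vee b)$ lies in $\Theta(E)\cap L^2=E$, and conversely $E\subseteq\Theta(E)$. When $U(a,b)=\emptyset$ and $[1]\big(\Theta(E)\big)=\{1\}$ the relevant set is $\{1\}\cap L=\emptyset$ because $1\notin L$, so by the second clause of Definition~\ref{def1} the expression $[a]E\vee[b]E$ is undefined.

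The remaining case $U(a,b)=\emptyset$ and $[1]\big(\Theta(E)\big)\neq\{1\}$ needs slightly more care, and this is the only non-routine step. First I would verify that $[1]\big(\Theta(E)\big)\cap L$ is non-empty so that an $\alpha\in L$ in that block actually exists. The only potential obstruction would be $[1]\big(\Theta(E)\big)\subseteq\{0,1\}$ with $0\in L^*\setminus L$ and $0\,\Theta(E)\,1$; but in a bounded lattice $0\,\Theta(E)\,1$ forces $\Theta(E)$ to collapse everything (for any $x\in L^*$, $x=x\vee 0\,\Theta(E)\,x\vee 1=1$), so $L\subseteq[1]\big(\Theta(E)\big)$, and since $a\in L$ this supplies the required $\alpha\in L$. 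Given any such $\alpha$, the equality $[1]\big(\Theta(E)\big)\cap L=[\alpha]E$ is a two-sided inclusion: the forward direction uses transitivity of $\Theta(E)$ through $\alpha\,\Theta(E)\,1$, and the reverse direction uses $\Theta(E)\cap L^2=E$ on the pair $(x,\alpha)\in L^2$. The statement for $\wedge$ follows by the order-theoretic duality inherent in Definition~\ref{def2}.
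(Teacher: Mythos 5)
Your proposal is correct and follows essentially the same route as the paper: unfold Definition~\ref{def1}, use the description of $a\vee^*b$ in $\mathbf L^*$ to split on whether $U(a,b)$ is empty, and in the third case rule out $[1]\big(\Theta(E)\big)\cap L=\emptyset$ by observing that this would force $(0,1)\in\Theta(E)$ and hence collapse $\mathbf L^*$, contradicting $L\neq\emptyset$. The identification $[c]\big(\Theta(E)\big)\cap L=[c]E$ for $c\in L$ via $\Theta(E)\cap L^2=E$ is exactly the paper's mechanism as well.
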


\begin{proof}
If $U(a,b)\neq\emptyset$ then
\[
[a\vee^*b]\big(\Theta(E)\big)\cap L=[a\vee b]\big(\Theta(E)\big)\cap L=[a\vee b]E\neq\emptyset
\]
and hence $[a]E\vee[b]E=[a\vee b]E$. If $U(a,b)=\emptyset$ and $[1]\big(\Theta(E)\big)=\{1\}$ then
\[
[a\vee^*b]\big(\Theta(E)\big)\cap L=[1]\big(\Theta(E)\big)\cap L=\{1\}\cap L=\emptyset
\]
and hence $[a]E\vee[b]E$ is not defined. Assume, finally, $U(a,b)=\emptyset$ and $[1]\big(\Theta(E)\big)\neq\{1\}$. Then $[1]\big(\Theta(E)\big)\cap L=\emptyset$ would imply $[1]\big(\Theta(E)\big)=\{0,1\}$ and hence $\Theta(E)=(L^*)^2$ whence $L=L^*\cap L=[1]\big(\Theta(E)\big)\cap L=\emptyset$, a contradiction. This shows that there exists some $\alpha\in[1]\big(\Theta(E)\big)\cap L$. Now
\[
[a\vee^*b]\big(\Theta(E)\big)\cap L=[1]\big(\Theta(E)\big)\cap L=[\alpha]\big(\Theta(E)\big)\cap L=[\alpha]E\neq\emptyset
\]
and hence $[a]E\vee[b]E=[\alpha]E$.
\end{proof}

\begin{remark}
Due to duality, the dual version of Lemma~\ref{lem1} holds, too.
\end{remark}

The main point is to show that the quotient partial algebra $(L/E,\vee,\wedge)$ as defined above is again a partial lattice, the so-called {\em quotient partial lattice by the congruence $E$}.

\begin{theorem}
Let $\mathbf L=(L,\vee,\wedge)$ be a partial lattice and $E\in\Con\mathbf L$. Then $\mathbf L/E:=(L/E,\vee,\wedge)$ is a partial lattice.
\end{theorem}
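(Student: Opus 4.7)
The plan is to exploit that $\mathbf L^*/\Theta(E)$ is already a lattice and to realise $\mathbf L/E$ inside it. First I would check that the map $\iota\colon L/E\to L^*/\Theta(E)$, $[a]E\mapsto[a]\Theta(E)$, is well-defined and injective; both are immediate from the defining property $E=\Theta(E)\cap L^2$. A consequence is that for any $x\in L^*$ the set $[x]\Theta(E)\cap L$ is either empty or is exactly one full $E$-class, so Definition~\ref{def1} can be rephrased as: $[a]E\vee[b]E$ is defined precisely when $[a\vee^*b]\Theta(E)$ meets $L$, in which case it equals the unique corresponding $E$-class; analogously for $\wedge$.

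Next I would run through the strong identities of Definition~\ref{def2}. Idempotency is trivial since $a\vee^*a=a\in L$, and commutativity is automatic from $a\vee^*b=b\vee^*a$. For strong associativity, suppose $([a]E\vee[b]E)\vee[c]E$ is defined. Choose witnesses $d,e\in L$ with $[d]\Theta(E)=[a\vee^*b]\Theta(E)$ and $[e]\Theta(E)=[d\vee^*c]\Theta(E)$; by lattice associativity in $\mathbf L^*/\Theta(E)$ the last class equals $[a\vee^*(b\vee^*c)]\Theta(E)$. Before concluding I must show that $[b]E\vee[c]E$ itself is defined: if $b\vee^*c\in L$ this is automatic, and if $b\vee^*c=1_{\mathbf L^*}$ then also $a\vee^*(b\vee^*c)=1_{\mathbf L^*}$, so $e\in[1_{\mathbf L^*}]\Theta(E)\cap L=[b\vee^*c]\Theta(E)\cap L$ supplies a witness. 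Once that is established, both triple joins correspond under $\iota$ to the single class $[a\vee^*(b\vee^*c)]\Theta(E)$ and therefore represent the same $E$-class. The symmetric direction and the dual argument for $\wedge$ are analogous.

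For the duality conditions, assume $[a]E\vee[b]E\stackrel{e}{=}[a]E$. Then $a\in[a\vee^*b]\Theta(E)$, so in $\mathbf L^*/\Theta(E)$ we have $[a]\Theta(E)=[a]\Theta(E)\vee[b]\Theta(E)$, forcing $[b]\Theta(E)\le[a]\Theta(E)$ and hence $[a\wedge^*b]\Theta(E)=[b]\Theta(E)$. Since $b$ itself lies in this class, $[a\wedge^*b]\Theta(E)\cap L\neq\emptyset$, so $[a]E\wedge[b]E$ is defined and equals $[b]E$. The dual implication is symmetric.

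The main obstacle is the definedness bookkeeping inside strong associativity: one cannot simply quote lattice associativity in $\mathbf L^*/\Theta(E)$, because the partial operations on $L/E$ demand witnesses that actually live in $L$. Producing those witnesses is exactly the phenomenon already isolated in Lemma~\ref{lem1}, and it is what forces the short case analysis on whether $b\vee^*c$ already belongs to $L$ or coincides with $1_{\mathbf L^*}$.
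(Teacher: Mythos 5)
Your proof is correct. The paper establishes the same theorem by combining Lemma~\ref{lem1} with an explicit case analysis for strong associativity: Case~1 where all the relevant joins exist in $L$, and Cases~2a--2d distinguishing which of $U(a,b)$, $U(b,c)$, $U(a\vee b,c)$, $U(a,b\vee c)$ are empty, with an absorbing element $\alpha\in[1_{\mathbf L^*}]\big(\Theta(E)\big)\cap L$ doing the work in the degenerate cases. You instead observe that $[a]E\mapsto[a]\big(\Theta(E)\big)$ injects $L/E$ into the lattice $\mathbf L^*/\big(\Theta(E)\big)$ and that any $\Theta(E)$-class meeting $L$ cuts out exactly one $E$-class, and then transfer associativity from the quotient lattice wholesale; the only residual work is the definedness bookkeeping you correctly isolate, namely producing a representative of $[b]E\vee[c]E$ in $L$, which needs only the two-way split on whether $b\vee^*c$ lies in $L$ or equals $1_{\mathbf L^*}$ (these are the only possibilities for $b,c\in L$, so your case split is exhaustive). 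This buys a shorter and more uniform argument --- five cases collapse to two --- at the cost of not recording the explicit description of the quotient operations that the paper extracts in Lemma~\ref{lem1} and reuses later (e.g.\ in Theorem~\ref{th1}). Your verification of the duality conditions is essentially identical to the paper's.
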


\begin{proof}
In this proof we often use Lemma~\ref{lem1}. We prove only a part of the conditions mentioned in Definition~\ref{def2}, the remaining conditions follow by duality. Let $a,b,c\in L$. It is evident directly by Definition~\ref{def1} that the partial operations $\vee$ and $\wedge$ on $L/E$ satisfy strong idempotency and strong commutativity. We need to prove also strong associativity. Assume $([a]E\vee[b]E)\vee[c]E$ to be defined. We have
\[
U(a,b),U(a\vee b,c)\neq\emptyset\text{ if and only if }U(b,c),U(a,b\vee c)\neq\emptyset
\]
and in this case we conclude
\[
(a\vee b)\vee c=\sup(a,b,c)=a\vee(b\vee c).
\]
$\bullet$ Case~1. $U(a,b),U(a\vee b,c)\neq\emptyset$. \\
Then $[a]E\vee([b]E\vee[c]E)$ is defined and
\begin{align*}
([a]E\vee[b]E)\vee[c]E & =[a\vee b]E\vee[c]E=[(a\vee b)\vee c]E=[a\vee(b\vee c)]E= \\
                       & =[a]E\vee[b\vee c]E=[a]E\vee([b]E\vee[c]E).
\end{align*}
$\bullet$ Case 2. $U(a,b)=\emptyset$ or ($U(a,b)\neq\emptyset$ and $U(a\vee b,c)=\emptyset$). \\
Then $1\in L^*$, $[1]\big(\Theta(E)\big)\neq\{1\}$ and $[a]E\vee([b]E\vee[c]E)$ is defined. According to the proof of Lemma~\ref{lem1} there exists some $\alpha\in[1]\big(\Theta(E)\big)\cap L$ and
\begin{align*}
[x]E\vee[\alpha]E & =[x\vee^*\alpha]\big(\Theta(E)\big)\cap L=[x\vee^*1]\big(\Theta(E)\big)\cap L=[1]\big(\Theta(E)\big)\cap L= \\
                  & =[\alpha]\big(\Theta(E)\big)\cap L=[\alpha]E
\end{align*}
for all $x\in L$. \\
$\bullet$ Case~2a. $U(a,b)=U(b,c)=\emptyset$. \\
Then
\[
([a]E\vee[b]E)\vee[c]E=[\alpha]E\vee[c]E=[\alpha]E=[a]E\vee[\alpha]E=[a]E\vee([b]E\vee[c]E).
\]
Hence $([a]E\vee[b]E)\vee[c]E=[a]E\vee([b]E\vee[c]E)$. \\
$\bullet$ Case~2b. $U(a,b)=\emptyset$ and $U(b,c)\neq\emptyset$. \\
Then $U(a,b\vee c)=\emptyset$ and
\[
([a]E\vee[b]E)\vee[c]E=[\alpha]E\vee[c]E=[\alpha]E=[a]E\vee[b\vee c]E=[a]E\vee([b]E\vee[c]E).
\]
$\bullet$ Case~2c. $U(a,b)\neq\emptyset$ and $U(b,c)=\emptyset$. \\
Then $U(a\vee b,c)=\emptyset$ and
\[
([a]E\vee[b]E)\vee[c]E=[a\vee b]E\vee[c]E=[\alpha]E=[a]E\vee[\alpha]E=[a]E\vee([b]E\vee[c]E).
\]
$\bullet$ Case~2d. $U(a,b),U(b,c)\neq\emptyset$ and $U(a\vee b,c)=U(a,b\vee c)=\emptyset$. \\
Then
\[
([a]E\vee[b]E)\vee[c]E=[a\vee b]E\vee[c]E=[\alpha]E=[a]E\vee[b\vee c]E=[a]E\vee([b]E\vee[c]E).
\]
If, conversely $[a]E\vee([b]E\vee[c]E)$ is defined then
\begin{align*}
[a]E\vee([b]E\vee[c]E) & \stackrel e=([b]E\vee[c]E)\vee[a]E\stackrel e=([c]E\vee[b]E)\vee[a]E\stackrel e= \\
                       & \stackrel e=[c]E\vee([b]E\vee[a]E)\stackrel e=([b]E\vee[a]E)\vee[c]E\stackrel e=([a]E\vee[b]E)\vee[c]E.
\end{align*}
Finally, the duality conditions remain to be proved. If $[a]E\vee[b]E\stackrel e=[a]E$ then $a\in[a]E\subseteq[a\vee^*b]\big(\Theta(E)\big)$, i.e.\ $a\mathrel{\Theta(E)}(a\vee^*b)$ and hence $(a\wedge^*b)\mathrel{\Theta(E)}\big((a\vee^*b)\wedge^*b\big)=b$ whence $[a\wedge^*b]\big(\Theta(E)\big)\cap L=[b]\big(\Theta(E)\big)\cap L=[b]E\neq\emptyset$, i.e.\ $[a]E\wedge[b]E\stackrel e=[b]E$.
\end{proof}

Similar to the situation for algebras there exist natural relationships between congruences and homomorphisms also for partial lattices.

\begin{lemma}
Let $\mathbf L=(L,\vee,\wedge)$ be a partial lattice and $E\in\Con\mathbf L$ and define $h\colon L\rightarrow L/E$ by $h(x):=[x]E$ for all $x\in L$. Then $h$ is a homomorphism from $\mathbf L$ to $\mathbf L/E$ and $\ker h=E$. If, in addition, $[0_{\mathbf L^*}]\big(\Theta(E)\big)=\{0_{\mathbf L^*}\}$ provided $0_{\mathbf L^*}\in L^*$ and $[1_{\mathbf L^*}]\big(\Theta(E)\big)=\{1_{\mathbf L^*}\}$ provided $1_{\mathbf L^*}\in L^*$ then $h$ is a closed homomorphism from $\mathbf L$ to $\mathbf L/E$.
\end{lemma}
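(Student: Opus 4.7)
The plan is to verify the three assertions separately, with the bulk of the work resting on Lemma~\ref{lem1} and its dual.

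First I would check that $h$ is a homomorphism. Given $a,b\in L$ with $a\vee b\stackrel e=c$, the element $c$ lies in $U(a,b)$, so $U(a,b)\neq\emptyset$, and the first clause of Lemma~\ref{lem1} yields $[a]E\vee[b]E=[a\vee b]E=[c]E=h(c)$. The meet clause follows from the dual of Lemma~\ref{lem1} noted in the remark. Next, the identity $\ker h=E$ is immediate: for $a,b\in L$ one has $(a,b)\in\ker h$ iff $[a]E=[b]E$ iff $(a,b)\in E$, since $E$ is an equivalence relation.

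The remaining point is closedness under the extra hypothesis. Suppose $h(a)\vee h(b)=[a]E\vee[b]E$ is defined in $\mathbf L/E$ for some $a,b\in L$; the aim is to show $a\vee b$ is defined in $\mathbf L$. I would split on whether $1_{\mathbf L^*}\in L^*$. If $1_{\mathbf L^*}\notin L^*$, then by the definition of the two-point extension $\vee$ is already total on $L$, so $a\vee b$ is defined automatically. If $1_{\mathbf L^*}\in L^*$, then by hypothesis $[1_{\mathbf L^*}]\bigl(\Theta(E)\bigr)=\{1_{\mathbf L^*}\}$, so the third clause of Lemma~\ref{lem1} is vacuous and we are in one of the first two cases. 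The second case ($U(a,b)=\emptyset$) would force $[a]E\vee[b]E$ to be undefined, contradicting the assumption. Hence $U(a,b)\neq\emptyset$, and by the earlier theorem describing $\mathbb L\bigl(\mathbb P(\mathbf L)\bigr)=\mathbf L$, this implies $a\vee b$ is defined in $\mathbf L$. The argument for $a\wedge b$ is strictly dual, invoking the hypothesis on $0_{\mathbf L^*}$.

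The only real subtlety is making the bookkeeping between the hypotheses and the three clauses of Lemma~\ref{lem1} clean: the hypothesis $[1_{\mathbf L^*}]\bigl(\Theta(E)\bigr)=\{1_{\mathbf L^*}\}$ is imposed conditionally on $1_{\mathbf L^*}$ existing in $L^*$, and one has to observe that when $1_{\mathbf L^*}\notin L^*$ the corresponding join is unconditionally defined. Once this split is in place, no further calculation is needed.
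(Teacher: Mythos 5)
Your proposal is correct and follows the same route as the paper, which simply states that the lemma ``easily follows from Lemma~\ref{lem1}''; you have supplied exactly the details that reference leaves implicit (the case split on whether $1_{\mathbf L^*}$, resp.\ $0_{\mathbf L^*}$, belongs to $L^*$, and the use of $U(a,b)\neq\emptyset$ being equivalent to $a\vee b$ being defined). No gaps.
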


\begin{proof}
This easily follows from Lemma~\ref{lem1}.
\end{proof}

For partial lattices we can now prove the following version of the Homomorphism Theorem.

\begin{theorem}
For $i=1,2$ let $\mathbf L_i=(L_i,\vee,\wedge)$ be partial lattices and $h$ a closed homomorphism from $\mathbf L_1$ to $\mathbf L_2$. Then $\ker h\in\Con\mathbf L_1$. If, in addition, $[0_{\mathbf L_1^*}]\big(\Theta(\ker h)\big)=\{0_{\mathbf L_1^*}\}$ provided $0_{\mathbf L_1^*}\in L_1^*$ and $[1_{\mathbf L_1^*}]\big(\Theta(\ker h)\big)=\{1_{\mathbf L_1^*}\}$ provided $1_{\mathbf L_1^*}\in L_1^*$ then $\big(h(L_1),\vee,\wedge\big)$ {\rm(}with partial operations defined as in $\mathbf L_2${\rm)} is a partial lattice which is isomorphic to $\mathbf L_1/(\ker h)$.
\end{theorem}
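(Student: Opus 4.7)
The plan is to proceed in two stages. First I would establish that $\ker h\in\Con\mathbf L_1$. Since $h$ is closed, Theorem~\ref{th2}(ii) provides an extension $h^*\colon\mathbf L_1^*\to\mathbf L_2^*$; because the operations on both lattices $\mathbf L_1^*$ and $\mathbf L_2^*$ are total, $h^*$ is a genuine lattice homomorphism, so $\ker h^*\in\Con\mathbf L_1^*$, and by construction $\ker h^*\cap L_1^2=\ker h$. Since $\ker h\subseteq\ker h^*$, the lattice congruence $\Theta(\ker h)$ generated by $\ker h$ is contained in $\ker h^*$, whence
\[
\ker h\subseteq\Theta(\ker h)\cap L_1^2\subseteq\ker h^*\cap L_1^2=\ker h,
\]
forcing $\Theta(\ker h)\cap L_1^2=\ker h$, i.e.\ $\ker h\in\Con\mathbf L_1$.

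For the second assertion I would define $\varphi\colon L_1/\ker h\to h(L_1)$ by $\varphi\big([a](\ker h)\big):=h(a)$; well-definedness and bijectivity are immediate from the definition of $\ker h$. To show $\varphi$ is an isomorphism of partial algebras, I combine Lemma~\ref{lem1} and its dual with the boundary hypotheses $[0_{\mathbf L_1^*}]\big(\Theta(\ker h)\big)=\{0_{\mathbf L_1^*}\}$ and $[1_{\mathbf L_1^*}]\big(\Theta(\ker h)\big)=\{1_{\mathbf L_1^*}\}$ (whenever these elements exist). Together they reduce the quotient join to the following chain of equivalent conditions: $[a](\ker h)\vee[b](\ker h)$ is defined in $\mathbf L_1/\ker h$; the set $U(a,b)$ is non-empty; $a\vee b$ is defined in $\mathbf L_1$; and, by closedness of $h$, $h(a)\vee h(b)$ is defined in $\mathbf L_2$. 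In the defined case each step preserves the value, giving
\[
\varphi\big([a](\ker h)\vee[b](\ker h)\big)=h(a\vee b)=h(a)\vee h(b).
\]
The meet case is dual. Closedness of $h$ further guarantees that $h(a)\vee h(b)$ lies in $h(L_1)$, so $(h(L_1),\vee,\wedge)$ is a bona fide partial algebra, and the above chain shows that $\varphi$ is simultaneously a homomorphism and closed, i.e.\ an isomorphism.

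Since isomorphisms of partial algebras preserve the strong identities and duality conditions of Definition~\ref{def2}, being isomorphic to the partial lattice $\mathbf L_1/\ker h$ then forces $(h(L_1),\vee,\wedge)$ to itself be a partial lattice. I expect the delicate point to be the boundary bookkeeping inside Lemma~\ref{lem1}: the two hypotheses on the blocks of $0_{\mathbf L_1^*}$ and $1_{\mathbf L_1^*}$ are exactly what prevents the quotient from acquiring spurious joins or meets that arise by collapsing the artificial bounds into $L_1$ and that would have no counterpart in $h(L_1)$, thereby destroying closedness of $\varphi$.
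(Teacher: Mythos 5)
Your proposal is correct and follows essentially the same route as the paper: extend $h$ to $h^*$ via Theorem~\ref{th2}(ii), conclude $\ker h=\ker h^*\cap L_1^2\in\Con\mathbf L_1$, and then use Lemma~\ref{lem1} together with the two boundary hypotheses to obtain the chain of equivalences ($[a]\ker h\vee[b]\ker h$ defined iff $a\vee b$ defined iff $h(a)\vee h(b)$ defined) that makes $[a]\ker h\mapsto h(a)$ a closed isomorphism. The only cosmetic difference is that you re-derive inline the fact that $\Theta\cap L_1^2\in\Con\mathbf L_1$ for $\Theta\in\Con\mathbf L_1^*$, which the paper instead cites from its lemma characterizing $\Con\mathbf L$.
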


\begin{proof}
According to Theorem~\ref{th2} there exists some homomorphism $h^*$ from $\mathbf L_1^*$ to $\mathbf L_2^*$ satisfying $h^*|L_1=h$. Now $\ker h^*\in\Con\mathbf L_1$ and  hence $\ker h=\ker h^*\cap L_1^2\in\Con\mathbf L_1$. If the additional condition holds then, according to Lemma~\ref{lem1}, the mapping $h(x)\mapsto[x]\ker h$ is a well-defined isomorphism from $\big(h(L_1),\vee,\wedge\big)$ to $\mathbf L_1/(\ker h)$ since for $a,b\in L_1$ the following are equivalent: $h(a)\vee h(b)$ exists in $\mathbf L_2$; $a\vee b$ exists in $\mathbf L_1$; $[a]\ker h\vee[b]\ker h$ exists in $\mathbf L_1/(\ker h)$. Analogous statements hold for $\wedge$ instead of $\vee$.
\end{proof}

Now we show that our concept of a quotient partial lattice $\mathbf L/E$ is sound, i.e.\ its two-point extension is isomorphic to the quotient lattice of the two-point extension of $\mathbf L$ with respect to $\Theta(E)$.

\begin{theorem}\label{th1}
Let $\mathbf L=(L,\vee,\wedge)$ be a partial lattice and $E\in\Con\mathbf L$. Then $(\mathbf L/E)^*\cong\mathbf L^*/\big(\Theta(E)\big)$.
\end{theorem}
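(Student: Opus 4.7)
The plan is to exhibit a bijection $f : (L/E)^* \to L^*/\Theta(E)$ and verify it is a lattice homomorphism. Define $f([x]E) := [x]\bigl(\Theta(E)\bigr)$ for $x \in L$, and, whenever the two-point extension adjoins them, $f(0_{(\mathbf L/E)^*}) := [0_{\mathbf L^*}]\bigl(\Theta(E)\bigr)$ and $f(1_{(\mathbf L/E)^*}) := [1_{\mathbf L^*}]\bigl(\Theta(E)\bigr)$. Well-definedness of $f$ on the $L$-classes is immediate from $E \subseteq \Theta(E)$, and injectivity on those classes is exactly the defining property $\Theta(E)\cap L^2 = E$ of $E \in \Con\mathbf L$.

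The key observation controlling the remaining bijectivity, already extracted in Lemma~\ref{lem1} and its dual, is that the two-point extension adjoins a top to $L/E$ iff $1_{\mathbf L^*} \in L^*$ and $[1_{\mathbf L^*}]\bigl(\Theta(E)\bigr) = \{1_{\mathbf L^*}\}$. In that case the class $[1_{\mathbf L^*}]\bigl(\Theta(E)\bigr)$ contains no element of $L$, so $f(1_{(\mathbf L/E)^*})$ is distinct from every $f([x]E)$ (injectivity with the new top) and is the unique top class of $L^*/\Theta(E)$ otherwise missing from the image (surjectivity). In the opposite case $[1_{\mathbf L^*}]\bigl(\Theta(E)\bigr)$ already contains some $\alpha \in L$ (the same argument used in the proof of Lemma~\ref{lem1}), so $f([\alpha]E)$ covers it and no top need be adjoined to $L/E$. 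The situation at $0_{\mathbf L^*}$ is dual; together these account for bijectivity.

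For preservation of $\vee^*$ (and dually $\wedge^*$), inputs that are adjoined $0$ or $1$ of $(\mathbf L/E)^*$ are trivial since those are the bottom or top of $(\mathbf L/E)^*$ and $f$ sends them to the bottom or top of $L^*/\Theta(E)$. For two $L$-classes $[x]E,[y]E$ I split on whether $U(x,y)=\emptyset$ in $L$. If not, Lemma~\ref{lem1} gives $[x]E \vee [y]E = [x\vee y]E$, and since also $x\vee^* y = x\vee y$ in $\mathbf L^*$, we obtain $f\bigl([x]E \vee^* [y]E\bigr) = [x\vee y]\bigl(\Theta(E)\bigr) = [x]\bigl(\Theta(E)\bigr) \vee [y]\bigl(\Theta(E)\bigr)$. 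If $U(x,y)=\emptyset$, then $x\vee^* y = 1_{\mathbf L^*}$; Lemma~\ref{lem1} together with the Section~3 description of $\vee^*$ in $(\mathbf L/E)^*$ shows that $[x]E \vee^* [y]E$ equals either the adjoined $1_{(\mathbf L/E)^*}$ (subcase $[1_{\mathbf L^*}]\bigl(\Theta(E)\bigr) = \{1_{\mathbf L^*}\}$) or some $[\alpha]E$ with $\alpha \in [1_{\mathbf L^*}]\bigl(\Theta(E)\bigr)\cap L$, and in both subcases $f$ sends the result to $[1_{\mathbf L^*}]\bigl(\Theta(E)\bigr) = [x]\bigl(\Theta(E)\bigr) \vee [y]\bigl(\Theta(E)\bigr)$, as required.

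The main obstacle is the bookkeeping of this case analysis: the four possible shapes of $(\mathbf L/E)^*$ (depending on which of $0, 1$ are adjoined) have to be matched against the independent possibilities of $[0_{\mathbf L^*}]\bigl(\Theta(E)\bigr)$ and $[1_{\mathbf L^*}]\bigl(\Theta(E)\bigr)$ being or failing to be singletons. Once Lemma~\ref{lem1} (and its dual) is invoked to show that the presence of an adjoined extremum in $(\mathbf L/E)^*$ is controlled exactly by singleton-ness of the corresponding $\Theta(E)$-class in $\mathbf L^*$, both bijectivity and the homomorphism property reduce to the routine verifications sketched above.
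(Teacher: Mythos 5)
Your proposal is correct and follows essentially the same route as the paper: the same map $f$, well-definedness and injectivity via $\Theta(E)\cap L^2=E$, surjectivity and the treatment of the adjoined extrema via the $\alpha\in[1_{\mathbf L^*}]\big(\Theta(E)\big)\cap L$ argument from Lemma~\ref{lem1}, and the same case split on $U(x,y)$ for the homomorphism property. The only detail left implicit is that when both extrema are adjoined to $(\mathbf L/E)^*$ their images are distinct, but this is immediate from your observation that in that case both $\Theta(E)$-classes are the singletons $\{0_{\mathbf L^*}\}$ and $\{1_{\mathbf L^*}\}$.
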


\begin{proof}
Define $f\colon(L/E)^*\rightarrow L^*/\big(\Theta(E)\big)$ in the following way:
\begin{align*}
               f([x]E) & :=[x]\big(\Theta(E)\big)\text{ for all }x\in L, \\
f(0_{(\mathbf L/E)^*}) & :=[0_{\mathbf L^*}]\big(\Theta(E)\big)\text{ if }0_{(\mathbf L/E)^*}\in(L/E)^*, \\
f(1_{(\mathbf L/E)^*}) & :=[1_{\mathbf L^*}]\big(\Theta(E)\big)\text{ if }1_{(\mathbf L/E)^*}\in(L/E)^*.
\end{align*}
We will prove that $f$ is an isomorphism from $(\mathbf L/E)^*$ to $\mathbf L^*/\big(\Theta(E)\big)$. Let $a,b\in L$. \\
$\bullet$ $f$ is well-defined. \\
If $[a]E=[b]E$ then $[a]\big(\Theta(E)\big)=[b]\big(\Theta(E)\big)$. We prove that $U([a]E,[b]E)=\emptyset$ implies $U(a,b)=\emptyset$. Assume $U(a,b)\neq\emptyset$, say $c\in U(a,b)$. Then $a\vee c\stackrel e=c$ and $b\vee c\stackrel e=c$. Hence
\[
[a]E\vee[c]E=[a\vee^*c]\big(\Theta(E)\big)\cap L=[c]\big(\Theta(E)\big)\cap L=[c]E
\]
and, analogously, $[b]E\vee[c]E=[c]E$, i.e.\ $[c]E\in U([a]E,[b]E)\neq\emptyset$. Hence, if $1_{(\mathbf L/E)^*}\in(L/E)^*$ then there exist $d,e\in L$ with $U([d]E,[e]E)=\emptyset$. Therefore $U(d,e)=\emptyset$ which shows $1_{\mathbf L^*}\in L^*$. Analogously, one can prove that $0_{(\mathbf L/E)^*}\in(L/E)^*$ implies $0_{\mathbf L^*}\in L^*$. \\
$\bullet$ $f$ is injective. \\
If $[a]\big(\Theta(E)\big)=[b]\big(\Theta(E)\big)$ then $(a,b)\in\Theta(E)\cap L^2=E$, i.e.\ $[a]E=[b]E$. Now assume $1_{(\mathbf L/E)^*}\in(L/E)^*$ and $[a]\big(\Theta(E)\big)=[1_{\mathbf L^*}]\big(\Theta(E)\big)$. Then there exist $c,d\in L$ with $U([c]E,[d]E)=\emptyset$. Now we have
\begin{align*}
[c]E\vee[a]E & =[c\vee^*a]\big(\Theta(E)\big)\cap L=[c\vee^*1_{\mathbf L^*}]\big(\Theta(E)\big)\cap L=[1_{\mathbf L^*}]\big(\Theta(E)\big)\cap L= \\
             & =[a]\big(\Theta(E)\big)\cap L=[a]E
\end{align*}
and, analogously, $[d]E\vee[a]E=[a]E$. This shows $[a]E\in U([c]E,[d]E)$, a contradiction. Hence $[a]\big(\Theta(E)\big)\neq[1_{\mathbf L^*}]\big(\Theta(E)\big)$. Analogously, one obtains that in case $0_{(\mathbf L/E)^*}\in(L/E)^*$ we have $[a]\big(\Theta(E)\big)\neq[0_{\mathbf L^*}]\big(\Theta(E)\big)$. Now assume $0_{(\mathbf L/E)^*},1_{(\mathbf L/E)^*}\in(L/E)^*$ and $[0_{\mathbf L^*}]\big(\Theta(E)\big)=[1_{\mathbf L^*}]\big(\Theta(E)\big)$. Then $0_{\mathbf L^*},1_{\mathbf L^*}\in L^*$ and $(0_{\mathbf L^*},1_{\mathbf L^*})\in\Theta(E)$ and hence $\Theta(E)=(L^*)^2$ whence
\[
E=\Theta(E)\cap L^2=(L^*)^2\cap L^2=L^2
\]
which implies $|L/E|=1$ and hence $0_{(\mathbf L/E)^*},1_{(\mathbf L/E)^*}\notin(L/E)^*$, a contradiction. \\
$\bullet$ $f$ is surjective. \\
Assume $1_{\mathbf L^*}\in L^*$. If $1_{(\mathbf L/E)^*}\in(L/E)^*$ then $f(1_{(\mathbf L/E)^*})=[1_{\mathbf L^*}]\big(\Theta(E)\big)$. Now assume $1_{(\mathbf L/E)^*}\notin(L/E)^*$. Then $[x]E\vee[y]E$ is defined in $L/E$ for every $x,y\in L$. Suppose $[1_{\mathbf L^*}]\big(\Theta(E)\big)=\{1_{\mathbf L^*}\}$. Then $U(x,y)\neq\emptyset$ for all $x,y\in L$ and hence $1_{\mathbf L^*}\notin L^*$, a contradiction. Therefore $[1_{\mathbf L^*}]\big(\Theta(E)\big)\neq\{1_{\mathbf L^*}\}$. According to the proof of Lemma~\ref{lem1} there exists some $\alpha\in[1_{\mathbf L^*}]\big(\Theta(E)\big)\cap L$ and we obtain $f([\alpha]E)=[\alpha]\big(\Theta(E)\big)=[1_{\mathbf L^*}]\big(\Theta(E)\big)$. Analogously, one can show that in case $0_{\mathbf L^*}\in L^*$ there exists some $x\in(L/E)^*$ with $f(x)=[0_{\mathbf L^*}]\big(\Theta(E)\big)$. \\
$\bullet$ $f$ is a homomorphism from $(\mathbf L/E)^*$ to $\mathbf L^*/\big(\Theta(E)\big)$. \\
First assume $U(a,b)\neq\emptyset$. Then
\begin{align*}
f([a]E\vee[b]E) & =f\Big([a\vee^*b]\big(\Theta(E)\big)\cap L\Big)=f\Big([a\vee b]\big(\Theta(E)\big)\cap L\Big)=f([a\vee b]E)= \\
                & =[a\vee b]\big(\Theta(E)\big)=[a]\big(\Theta(E)\big)\vee[b]\big(\Theta(E)\big)=f([a]E)\vee f([b]E).
\end{align*}
Now assume $U(a,b)=\emptyset$. Then $1_{\mathbf L^*}\in L^*$. First assume $[1_{\mathbf L^*}]\big(\Theta(E)\big)=\{1_{\mathbf L^*}\}$. Then $1_{(\mathbf L/E)^*}\in(L/E)^*$ and
\begin{align*}
f([a]E\vee[b]E) & =f(1_{(\mathbf L/E)^*})=[1_{\mathbf L^*}]\big(\Theta(E)\big)=[a\vee b]\big(\Theta(E)\big)=[a]\big(\Theta(E)\big)\vee[b]\big(\Theta(E)\big)= \\
                & =f([a]E)\vee f([b]E).
\end{align*}
Finally, assume $[1_{\mathbf L^*}]\big(\Theta(E)\big)\neq\{1_{\mathbf L^*}\}$. According to the proof of Lemma~\ref{lem1} there exists some $\alpha\in[1_{\mathbf L^*}]\big(\Theta(E)\big)\cap L$ and we have
\begin{align*}
f([a]E\vee[b]E) & =f\Big([a\vee^*b]\big(\Theta(E)\big)\cap L\Big)=f\Big([1_{\mathbf L^*}]\big(\Theta(E)\big)\cap L\Big)=f\Big([\alpha]\big(\Theta(E)\big)\cap L\Big)= \\
                & =f([\alpha]E)=[\alpha]\big(\Theta(E)\big)=[1_{\mathbf L^*}]\big(\Theta(E)\big)=[a\vee b]\big(\Theta(E)\big)= \\
                & =[a]\big(\Theta(E)\big)\vee[b]\big(\Theta(E)\big)=f([a]E)\vee f([b]E).
\end{align*}
It is easy to see that $f(x\vee y)=f(x)\vee f(y)$ for all $(x,y)\in\big((L/E)^*\big)^2\setminus(L/E)^2$. Analogously, one can prove $f(x\wedge y)=f(x)\wedge f(y)$ for all $x,y\in(L/E)^*$.
\end{proof}

\section{Examples}

In the following we present several examples showing different situations concerning two-point extensions and quotient partial lattices.

\begin{example}
Let $\mathbf L$ denote the partial lattice visualized in Figure~4:

\vspace*{-3mm}

\begin{center}
\setlength{\unitlength}{7mm}
\begin{picture}(3,4)
\put(0,1){\circle*{.3}}
\put(0,3){\circle*{.3}}
\put(2,2){\circle*{.3}}
\put(0,1){\line(0,1)2}
\put(-.15,.25){$a$}
\put(-.15,3.4){$c$}
\put(2.4,1.85){$b$}
\put(.2,-.8){{\rm Fig.~4}}
\end{picture}
\end{center}

\vspace*{3mm}

The lattice $\mathbf L^*$ is depicted in Figure~5:

\vspace*{-3mm}

\begin{center}
\setlength{\unitlength}{7mm}
\begin{picture}(6,8)
\put(3,1){\circle*{.3}}
\put(1,3){\circle*{.3}}
\put(5,4){\circle*{.3}}
\put(1,5){\circle*{.3}}
\put(3,7){\circle*{.3}}
\put(3,1){\line(-1,1)2}
\put(3,1){\line(2,3)2}
\put(3,7){\line(-1,-1)2}
\put(3,7){\line(2,-3)2}
\put(1,3){\line(0,1)2}
\put(2.6,.25){$0_{\mathbf L^*}$}
\put(.3,2.85){$a$}
\put(.3,4.85){$c$}
\put(5.4,3.85){$b$}
\put(2.6,7.4){$1_{\mathbf L^*}$}
\put(2.2,-.75){{\rm Fig.~5}}
\end{picture}
\end{center}

\vspace*{3mm}

Here $0_{\mathbf L^*},1_{\mathbf L^*}\in L^*$. Put $E:=\{a,c\}^2\cup\{b\}^2$ Then $\Theta(E)=\{0_{\mathbf L^*}\}^2\cup\{a,c\}^2\cup\{b\}^2\cup\{1_{\mathbf L^*}\}^2$ and hence $\Theta(E)$ is a congruence on $\mathbf L^*$ satisfying $\Theta(E)\cap L^2=E$ and $\mathbf L/E$ is visualized in Figure~6:
\begin{center}
\setlength{\unitlength}{7mm}
\begin{picture}(2,1)
\put(0,1){\circle*{.3}}
\put(2,1){\circle*{.3}}
\put(-.6,.25){$[a]E$}
\put(1.4,.25){$[b]E$}
\put(.2,-1.1){{\rm Fig.~6}}
\end{picture}
\end{center}

\vspace*{5mm}

The lattice $(\mathbf L/E)^*$ is depicted in Figure~7:

\vspace*{-3mm}

\begin{center}
\setlength{\unitlength}{7mm}
\begin{picture}(6,6)
\put(3,1){\circle*{.3}}
\put(1,3){\circle*{.3}}
\put(5,3){\circle*{.3}}
\put(3,5){\circle*{.3}}
\put(3,1){\line(-1,1)2}
\put(3,1){\line(1,1)2}
\put(3,5){\line(-1,-1)2}
\put(3,5){\line(1,-1)2}
\put(2.2,.25){$0_{(\mathbf L/E)^*}$}
\put(-.4,2.85){$[a]E$}
\put(5.3,2.85){$[b]E$}
\put(2.2,5.4){$1_{(\mathbf L/E)^*}$}
\put(2.2,-.85){{\rm Fig.~7}}
\end{picture}
\end{center}

\vspace*{3mm}

and we have $0_{(\mathbf L/E)^*},1_{(\mathbf L/E)^*}\in(L/E)^*$. Finally, $\mathbf L^*/\big(\Theta(E)\big)$ is visualized in Figure~8:

\vspace*{-1mm}

\begin{center}
\setlength{\unitlength}{7mm}
\begin{picture}(6,6)
\put(3,1){\circle*{.3}}
\put(1,3){\circle*{.3}}
\put(5,3){\circle*{.3}}
\put(3,5){\circle*{.3}}
\put(3,1){\line(-1,1)2}
\put(3,1){\line(1,1)2}
\put(3,5){\line(-1,-1)2}
\put(3,5){\line(1,-1)2}
\put(1.5,.25){$[0_{\mathbf L^*}]\big(\Theta(E)\big)$}
\put(-1.8,2.85){$[a]\big(\Theta(E)\big)$}
\put(5.3,2.85){$[b]\big(\Theta(E)\big)$}
\put(1.5,5.4){$[1_{\mathbf L^*}]\big(\Theta(E)\big)$}
\put(2.2,-.85){{\rm Fig.~8}}
\end{picture}
\end{center}

\vspace*{3mm}

In accordance with Theorem~\ref{th1}, $(\mathbf L/E)^*\cong\mathbf L^*/\big(\Theta(E)\big)$.

\end{example}

\begin{example}\label{ex1}
Let $\mathbf L$ be the partial lattice depicted in Figure~9:

\vspace*{-3mm}

\begin{center}
\setlength{\unitlength}{7mm}
\begin{picture}(6,4)
\put(0,1){\circle*{.3}}
\put(2,3){\circle*{.3}}
\put(4,1){\circle*{.3}}
\put(6,3){\circle*{.3}}
\put(2,3){\line(-1,-1)2}
\put(2,3){\line(1,-1)2}
\put(4,1){\line(1,1)2}
\put(-.15,.25){$a$}
\put(3.85,.25){$b$}
\put(1.85,3.4){$c$}
\put(5.85,3.4){$d$}
\put(2.2,-.75){{\rm Fig.~9}}
\end{picture}
\end{center}

\vspace*{3mm}

The lattice $\mathbf L^*$ is visualized in Figure~10:

\vspace*{-3mm}

\begin{center}
\setlength{\unitlength}{7mm}
\begin{picture}(8,8)
\put(3,1){\circle*{.3}}
\put(1,3){\circle*{.3}}
\put(5,3){\circle*{.3}}
\put(3,5){\circle*{.3}}
\put(7,5){\circle*{.3}}
\put(5,7){\circle*{.3}}
\put(3,1){\line(-1,1)2}
\put(3,1){\line(1,1)4}
\put(5,3){\line(-1,1)2}
\put(5,7){\line(-1,-1)4}
\put(5,7){\line(1,-1)2}
\put(2.6,.25){$0_{\mathbf L^*}$}
\put(.3,2.85){$a$}
\put(5.4,2.85){$b$}
\put(2.3,4.85){$c$}
\put(7.4,4.85){$d$}
\put(4.6,7.4){$1_{\mathbf L^*}$}
\put(3.1,-.75){{\rm Fig.~10}}
\end{picture}
\end{center}

\vspace*{3mm}

Here $0_{\mathbf L^*},1_{\mathbf L^*}\in L^*$. Put $E:=\{a\}^2\cup\{b,d\}^2\cup\{c\}^2$. Let us mention that $(b,d)\in E$ and $a\vee b=c$, but $a\vee d$ does not exist in $\mathbf L$. We have $\Theta(E)=\{0_{\mathbf L^*}\}^2\cup\{a\}^2\cup\{b,d\}^2\cup\{c,1_{\mathbf L^*}\}^2$ and hence $\Theta(E)\cap L^2=E$ and $\mathbf L/E$ is depicted in Figure~11:

\vspace*{-3mm}

\begin{center}
\setlength{\unitlength}{7mm}
\begin{picture}(4,4)
\put(0,1){\circle*{.3}}
\put(4,1){\circle*{.3}}
\put(2,3){\circle*{.3}}
\put(2,3){\line(-1,-1)2}
\put(2,3){\line(1,-1)2}
\put(-.6,.25){$[a]E$}
\put(3.4,.25){$[d]E$}
\put(1.4,3.4){$[c]E$}
\put(1.1,-.85){{\rm Fig.~11}}
\end{picture}
\end{center}

\vspace*{3mm}

One can see that although $[a]E\vee[d]E$ exists in $\mathbf L/E$, $a\vee d$ does not exist in $\mathbf L$. The lattice $(\mathbf L/E)^*$ is visualized in Figure~12:

\vspace*{-3mm}

\begin{center}
\setlength{\unitlength}{7mm}
\begin{picture}(6,6)
\put(3,1){\circle*{.3}}
\put(1,3){\circle*{.3}}
\put(5,3){\circle*{.3}}
\put(3,5){\circle*{.3}}
\put(3,1){\line(-1,1)2}
\put(3,1){\line(1,1)2}
\put(3,5){\line(-1,-1)2}
\put(3,5){\line(1,-1)2}
\put(2.2,.25){$0_{(\mathbf L/E)^*}$}
\put(-.4,2.85){$[a]E$}
\put(5.3,2.85){$[d]E$}
\put(2.4,5.4){$[c]E$}
\put(2.1,-.85){{\rm Fig.~12}}
\end{picture}
\end{center}

\vspace*{3mm}

and we have $0_{(\mathbf L/E)^*}\in(L/E)^*$ and $1_{(\mathbf L/E)^*}\notin(L/E)^*$. Finally, $\mathbf L^*/\big(\Theta(E)\big)$ is depicted in Figure~13:

\vspace*{-3mm}

\begin{center}
\setlength{\unitlength}{7mm}
\begin{picture}(6,6)
\put(3,1){\circle*{.3}}
\put(1,3){\circle*{.3}}
\put(5,3){\circle*{.3}}
\put(3,5){\circle*{.3}}
\put(3,1){\line(-1,1)2}
\put(3,1){\line(1,1)2}
\put(3,5){\line(-1,-1)2}
\put(3,5){\line(1,-1)2}
\put(1.5,.25){$[0_{\mathbf L^*}]\big(\Theta(E)\big)$}
\put(-1.8,2.85){$[a]\big(\Theta(E)\big)$}
\put(5.3,2.85){$[b]\big(\Theta(E)\big)$}
\put(1.8,5.4){$[c]\big(\Theta(E)\big)$}
\put(2.1,-.85){{\rm Fig.~13}}
\end{picture}
\end{center}

\vspace*{3mm}

\end{example}

\begin{example}
Let $\mathbf L$ denote the partial lattice from Example~\ref{ex1}. Then $0_{\mathbf L^*},1_{\mathbf L^*}\in L^*$. Put $E:=\{a,c\}^2\cup\{b\}^2\cup\{d\}^2$. Here $(a,c)\in E$ and $c\wedge d=b$, but $a\wedge d$ does not exist in $\mathbf L$. We have $\Theta(E)=\{0_{\mathbf L^*},b\}^2\cup\{a,c\}^2\cup\{d\}^2\cup\{1_{\mathbf L^*}\}^2$ and hence $\Theta(E)\cap L^2=E$ and $\mathbf L/E$ is visualized in Figure~14:

\vspace*{-3mm}

\begin{center}
\setlength{\unitlength}{7mm}
\begin{picture}(4,4)
\put(2,1){\circle*{.3}}
\put(0,3){\circle*{.3}}
\put(4,3){\circle*{.3}}
\put(2,1){\line(-1,1)2}
\put(2,1){\line(1,1)2}
\put(1.4,.25){$[b]E$}
\put(-.6,3.4){$[a]E$}
\put(3.4,3.4){$[d]E$}
\put(1.1,-.85){{\rm Fig.~14}}
\end{picture}
\end{center}

\vspace*{3mm}

One can see that although $[a]E\wedge[b]E$ exists in $\mathbf L/E$, $a\wedge b$ does not exists in $\mathbf L$. The lattice $(\mathbf L/E)^*$ is depicted in Figure~15:

\vspace*{-3mm}

\begin{center}
\setlength{\unitlength}{7mm}
\begin{picture}(6,6)
\put(3,1){\circle*{.3}}
\put(1,3){\circle*{.3}}
\put(5,3){\circle*{.3}}
\put(3,5){\circle*{.3}}
\put(3,1){\line(-1,1)2}
\put(3,1){\line(1,1)2}
\put(3,5){\line(-1,-1)2}
\put(3,5){\line(1,-1)2}
\put(2.4,.25){$[b]E$}
\put(-.4,2.85){$[a]E$}
\put(5.3,2.85){$[d]E$}
\put(1.8,5.4){$[1_{(\mathbf L/E)^*}]E$}
\put(2.1,-.85){{\rm Fig.~15}}
\end{picture}
\end{center}

\vspace*{3mm}

and we have $0_{(\mathbf L/E)^*}\notin(L/E)^*$ and $1_{(\mathbf L/E)^*}\in(L/E)^*$. Finally, $\mathbf L^*/\big(\Theta(E)\big)$ is visualized in Figure~16:

\vspace*{-3mm}

\begin{center}
\setlength{\unitlength}{7mm}
\begin{picture}(6,6)
\put(3,1){\circle*{.3}}
\put(1,3){\circle*{.3}}
\put(5,3){\circle*{.3}}
\put(3,5){\circle*{.3}}
\put(3,1){\line(-1,1)2}
\put(3,1){\line(1,1)2}
\put(3,5){\line(-1,-1)2}
\put(3,5){\line(1,-1)2}
\put(1.5,.25){$[0_{\mathbf L^*}]\big(\Theta(E)\big)$}
\put(-1.8,2.85){$[a]\big(\Theta(E)\big)$}
\put(5.3,2.85){$[d]\big(\Theta(E)\big)E$}
\put(1.5,5.4){$[1_{\mathbf L^*}]\big(\Theta(E)\big)$}
\put(2.1,-.85){{\rm Fig.~16}}
\end{picture}
\end{center}

\vspace*{3mm}

\end{example}

The next example shows that $\mathbf L/E$ may be a lattice even if $\mathbf L$ is only a partial lattice.

\begin{example}
Let $\mathbf L$ denote the partial lattice from Example~\ref{ex1}. Then $0_{\mathbf L^*},1_{\mathbf L^*}\in L^*$. Put $E:=\{a\}^2\cup\{b,c\}^2\cup\{d\}^2$. Then $\Theta(E)=\{0_{\mathbf L^*},a\}^2\cup\{b,c\}^2\cup\{d,1_{\mathbf L^*}\}^2$ and hence $\Theta(E)\cap L^2=E$ and $\mathbf L/E$ is depicted in Figure~17:

\begin{center}
\setlength{\unitlength}{7mm}
\begin{picture}(1,4)
\put(0,0){\circle*{.3}}
\put(0,2){\circle*{.3}}
\put(0,4){\circle*{.3}}
\put(0,0){\line(0,1)4}
\put(.4,-.15){$[a]E$}
\put(.4,1.85){$[b]E$}
\put(.4,3.85){$[d]E$}
\put(-.9,-1.3){{\rm Fig.~17}}
\end{picture}
\end{center}

\vspace*{7mm}

Hence $(\mathbf L/E)^*=\mathbf L/E$ and $0_{(\mathbf L/E)^*},1_{(\mathbf L/E)^*}\notin(L/E)^*$. Finally, $\mathbf L^*/\big(\Theta(E)\big)$ is visualized in Figure~18:

\begin{center}
\setlength{\unitlength}{7mm}
\begin{picture}(1,4)
\put(0,0){\circle*{.3}}
\put(0,2){\circle*{.3}}
\put(0,4){\circle*{.3}}
\put(0,0){\line(0,1)4}
\put(.4,-.15){$[0_{\mathbf L^*}]\big(\Theta(E)\big)$}
\put(.4,1.85){$[b]\big(\Theta(E)\big)$}
\put(.4,3.85){$[d]\big(\Theta(E)\big)$}
\put(-.9,-1.45){{\rm Fig.~18}}
\end{picture}
\end{center}

\vspace*{7mm}

\end{example}

\section{Conclusion}

We introduced the so-called two-point extension of a partial lattice which extends it to a lattice with everywhere defined operations. This was not possible by using the one-point extension which was intensively used for partial algebras in general because the one-point extension of a partial lattice need not be a lattice. However, this is not a final step concerning this research. Namely lattice distributivity or modularity can be defined for partial lattices by strong and regular identities but these are not preserved by the two-point extension. For example, if a partial lattice $\mathbf L=(L,\vee,\wedge)$ is a finite antichain containing $n\geq3$ elements then the join, respectively meet of two distinct elements of $L$ is not defined and hence $\mathbf L$ trivially satisfies the strong distributive identity. However, its two-point extension $\mathbf L^*$ is isomorphic to the non-modular lattice $\mathbf M_n$. Hence, this research may continue with finding other tools which avoid this difficulty.

Authors' addresses:

Ivan Chajda \\
Palack\'y University Olomouc \\
Faculty of Science \\
Department of Algebra and Geometry \\
17.\ listopadu 12 \\
771 46 Olomouc \\
Czech Republic \\
ivan.chajda@upol.cz

Helmut L\"anger \\
TU Wien \\
Faculty of Mathematics and Geoinformation \\
Institute of Discrete Mathematics and Geometry \\
Wiedner Hauptstra\ss e 8-10 \\
1040 Vienna \\
Austria, and \\
Palack\'y University Olomouc \\
Faculty of Science \\
Department of Algebra and Geometry \\
17.\ listopadu 12 \\
771 46 Olomouc \\
Czech Republic \\
helmut.laenger@tuwien.ac.at
\end{document}